\newtheorem{theorem}{Theorem}[section]
\newtheorem{lemma}[theorem]{Lemma}
\theoremstyle{definition}
\theoremstyle{remark}
\newtheorem{remark}[theorem]{Remark}
\begin{document}
\setcounter{page}{1}

\title[Certain Inequalities Involving the $q$-Deformed Gamma Function ]{Certain Inequalities Involving the $q$-Deformed Gamma Function$^{\dagger}$ }

\author[Kwara Nantomah and Edward Prempeh]{Kwara Nantomah$^{*}$$^1$  and Edward Prempeh$^2$}

\address{$^{1}$ Department of Mathematics, University for Development Studies, Navrongo Campus, P. O. Box 24, Navrongo, UE/R, Ghana. }
\email{\textcolor[rgb]{0.00,0.00,0.84}{mykwarasoft@yahoo.com, knantomah@uds.edu.gh}}

\address{$^{2}$ Department of Mathematics, Kwame Nkrumah University of Science and Technology, Kumasi, Ghana. }
\email{\textcolor[rgb]{0.00,0.00,0.84}{eprempeh.cos@knust.edu.gh}}


\let\thefootnote\relax\footnotetext{$^{*}$ Corresponding author.
\newline \indent $^{\dagger}$ Cite this article as: "K. Nantomah and E. Prempeh, \textit{Certain inequalities involving the $q$-deformed Gamma function}, Problemy Analiza - Issues of Analysis, \textbf{3}(22)(2015), No.1, In press."}

\begin{abstract}
This paper is inspired by the work of J. S\'{a}ndor in 2006. In the paper, the authors establish some double inequalities involving  the ratio $ \frac{\Gamma_{q}(x+1)}{ \Gamma_{q} \left( x+\frac{1}{2}\right)}$, where $\Gamma_{q}(x)$ is  the $q$-deformation of the classical Gamma function denoted by $\Gamma(x)$. The method employed in presenting the results makes use of Jackson's $q$-integral representation of the $q$-deformed Gamma function. In addition, H\"{o}lder's inequality for  the $q$-integral, as well as  some basic analytical techniques involving the $q$-analogue of the psi function are used. As a consequence, $q$-analogues of the classical Wendel's asymptotic relation are obtained. At the end, sharpness of the inequalities established in this paper is investigated.\\

\noindent {\footnotesize \emph{\textbf{Keywords}}:  \emph{Gamma function, $q$-deformed Gamma function, $q$-integral, Inequality.}}\\
\noindent {\footnotesize \emph{\textbf{2010 Mathematics Subject Classification}}: 33B15, 33D05.}\\

\begin{center}  \emph{Submitted: 5 November 2014, Accepted: 17 March 2015} \end{center}

\end{abstract} 

\maketitle

\section{Introduction and Preliminaries}
\noindent
Let $\Gamma(x)$ be the well-known classical Gamma function  defined for $x>0$ by
\begin{equation*}\label{eqn:gamma}
\Gamma(x)=\int_0^\infty t^{x-1}e^{-t}\,dt. 
\end{equation*}

\noindent
The psi function $\psi(x)$ otherwise known as the digamma function is defined as the logarithmic derivative of the Gamma function. That is,
\begin{equation*}\label{eqn:Psi}
\psi(x)=\frac{d}{dx}\ln(\Gamma(x))=\frac{\Gamma'(x)}{\Gamma(x)},\qquad x>0.
\end{equation*}

\noindent
The Jackson's $q$-integral from $0$ to $a$ and from $0$ to $\infty$ are defined as follows  \cite{Jackson-1910}:
\begin{equation*}\label{eqn:q-integral-1}
\int_0^{a}f(t)\,d_{q}t=(1-q)a\sum_{n=0}^{\infty}f(aq^{n})q^{n},
\end{equation*}

\begin{equation*}\label{eqn:q-integral-2}
\int_0^{\infty}f(t)\,d_{q}t=(1-q)\sum_{-\infty}^{\infty}f(q^{n})q^{n}
\end{equation*}
provided that the sums converge absolutely.\\

\noindent
In a generic interval $[a,b]$, the Jackson's $q$-integral takes the following form:
\begin{equation*}\label{eqn:q-integral-generic}
\int_a^{b}f(t)\,d_{q}t = \int_0^{b}f(t)\,d_{q}t  - \int_0^{a}f(t)\,d_{q}t .
\end{equation*}

\noindent
For more information on this special integral, see \cite{Jackson-1910}.\\

\noindent
For $a\in C$, the set of complex numbers, we have  the following notations:\\
$(a;q)_0=1$, \, $(a;q)_n=\prod_{i=0}^{n-1}(1-aq^i)$, \, $(a;q)_{\infty}=\prod_{i=0}^{\infty}(1-aq^i)$\, and \, $[n]_{q}!=\frac{(q;q)_n}{(1-q)^n}$.\\

\noindent
The $q$-deformed Gamma function (also known as the $q$-Gamma function or the $q$-analogue of the Gamma function) is defined for $q\in(0,1)$ and $x>0$  by
\begin{align}
\Gamma_q(x)& =\int_0^{\frac{1}{1-q}} t^{x-1}E_{q}^{-qt}\,d_{q}t = \int_0^{[\infty]_q} t^{x-1}E_{q}^{-qt}\,d_{q}t \\
&  = (1-q)^{1-x}\prod_{n=0}^{\infty}\frac{1-q^{n+1}}{1-q^{n+x}} \nonumber
\end{align}
where \, $E_{q}^{t}=\sum_{n=0}^{\infty}q^{\frac{n(n-1)}{2}}\frac{t^{n}}{[n]_{q}!}=(-(1-q)t;q)_{\infty}$\, is a $q$-analogue of the classical exponential function. See also \cite{Askey-1978}, \cite{Chung-Kim-Mansour-2014},  \cite{Ege-Yyldyrym-2012}, \cite{Elmonster-Brahim-Fitouhi-2012} and the references therein. \\

\noindent
The function, $\Gamma_q$ exhibits  the following properties (see  \cite{Chung-Kim-Mansour-2014}),
\begin{align}
\Gamma_{q}(x+1)&=[x]_{q}\Gamma_q(x) \label{eqn:functional-q-Gamma},\\
\Gamma_{q}(1)&=1, \nonumber \\
\Gamma_{q}\left (\frac{1}{2} \right)&=\sqrt{\pi_q} \nonumber 
\end{align}
where \, $[x]_q=\frac{1-q^x}{1-q}$,\,  and \,$\pi_q=q^{\frac{1}{4}} \left(\left[-\frac{1}{2} \right]_{q^{2}}! \right)^2$  is the $q$-analogue of $\pi$. Note that $\pi_q$ is obtained by setting $n=0$ in the $q$-factorial, $[n]_q!$. \\

\noindent
Let $\psi_q(x)$ be the $q$-analogue of the psi function similarly defined for $x>0$ as follows: 
\begin{equation*}\label{eqn:Psi}
\psi_q(x)=\frac{\Gamma_q'(x)}{\Gamma_q(x)}= -\ln(1-q) + \ln q \sum_{n=0}^{\infty}\frac{q^{n+x}}{1-q^{n+x}}
\end{equation*}
It is well-known in literature that this function is increasing for $x>0$. For instance, see Lemma 2.2 of  \cite{Shabani-2008}.\\

\noindent
In 1987, Lew, Frauenthal and Keyfitz \cite{Lew-Frauenthal-Keyfitz-1987} by studying certain problems of traffic 
flow established the double inequality:\\
\begin{equation}\label{eqn:traffic-flow}
2\Gamma \left( n+\frac{1}{2}\right) \leq \Gamma \left( \frac{1}{2}\right)\Gamma(n+1) \leq 2^n \Gamma \left( n+\frac{1}{2}\right), \quad n\in N
\end{equation}
The inequalities ~(\ref{eqn:traffic-flow}) can be rearranged as follows:

\begin{equation*}\label{eqn:traffic-flow-2}
\frac{2}{\sqrt{\pi}} \leq \frac{\Gamma(n+1)}{ \Gamma \left( n+\frac{1}{2}\right)} \leq \frac{2^n}{\sqrt{\pi}}.
\end{equation*}

\noindent
In 2006, S\'{a}ndor \cite{Sandor-2006} by using the following inequalities due Wendel \cite{Wendel-1948}:
\begin{equation}\label{eqn:Wendel}
\left( \frac{x}{x+s}\right)^{1-s} \leq \frac{\Gamma(x+s)}{ x^s \Gamma(x)} \leq 1
\end{equation}
for $x>0$ and $s\in(0,1)$, extended and refined inequality ~(\ref{eqn:traffic-flow}) as follows:

\begin{equation}\label{eqn:Sandor}
\sqrt{x} \leq \frac{\Gamma(x+1)}{ \Gamma \left( x+\frac{1}{2}\right)} \leq \sqrt{x+\frac{1}{2}}
\end{equation}
for $x>0$ .\\

\noindent
The objective of this paper is to establish certain inequalities involving the $q$-deformed Gamma function. First,  employing similar techniques as in  \cite{Sandor-2006},  \cite{Wendel-1948}, and \cite{Qi-Luo-2012}, we prove an $q$-analogue of the double inequality ~(\ref{eqn:Sandor}). Next, using basic analytical procedures, we prove some related double inequality. At the end, we investigate the sharpness of the inequalities established.


\section{Main Results}

\noindent
Let us begin with the following Lemma. 

\begin{lemma}\label{lem:q-of-Wendel}
Assume that  $s\in(0,1)$ and $q\in(0,1)$. Then for any $x>0$ the following inequality is valid.
\begin{equation}\label{eqn:q-of-Wendel} 
\left( \frac{[x]_q}{[x+s]_q}\right)^{1-s} \leq \frac{\Gamma_{q}(x+s)}{ [x]_q^s \Gamma_{q}(x)} \leq 1
\end{equation}
\end{lemma}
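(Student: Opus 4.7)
The plan is to mimic Wendel's original argument in the $q$-setting: use the Jackson $q$-integral representation of $\Gamma_q$ together with Hölder's inequality for the $q$-integral (which is available since the Jackson integral $\int_0^{1/(1-q)} f\,d_q t = (1-q)\sum_{n=0}^\infty q^n f(q^n/(1-q))$ is just a weighted series with non-negative weights, and all integrands in sight are positive on the range of integration).

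For the upper bound, I would start from
\begin{equation*}
\Gamma_q(x+s) = \int_0^{1/(1-q)} t^{x+s-1} E_q^{-qt}\,d_q t
\end{equation*}
and split the integrand as
\begin{equation*}
t^{x+s-1} E_q^{-qt} = \bigl(t^{x-1} E_q^{-qt}\bigr)^{1-s} \bigl(t^{x} E_q^{-qt}\bigr)^{s},
\end{equation*}
which is verified by checking the exponent $(x-1)(1-s)+xs = x+s-1$. Applying the $q$-Hölder inequality with conjugate exponents $\tfrac{1}{1-s}$ and $\tfrac{1}{s}$ then yields
\begin{equation*}
\Gamma_q(x+s) \leq \Gamma_q(x)^{1-s}\,\Gamma_q(x+1)^{s},
\end{equation*}
and the functional equation~(\ref{eqn:functional-q-Gamma}) immediately gives $\Gamma_q(x+s)\leq [x]_q^{s}\,\Gamma_q(x)$, which is the right-hand inequality in~(\ref{eqn:q-of-Wendel}).

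For the lower bound, rather than repeating the Hölder argument I would invoke the upper bound already proved, but with the substitutions $x\mapsto x+s$ and $s\mapsto 1-s$. This yields $\Gamma_q(x+1)\leq [x+s]_q^{1-s}\,\Gamma_q(x+s)$, and using $\Gamma_q(x+1)=[x]_q \Gamma_q(x)$ once more and rearranging gives
\begin{equation*}
\frac{\Gamma_q(x+s)}{[x]_q^{s}\,\Gamma_q(x)} \;\geq\; \left(\frac{[x]_q}{[x+s]_q}\right)^{1-s},
\end{equation*}
completing the proof.

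The argument is structurally very close to the classical case, so no new analytic input is required; the main (minor) obstacle is bookkeeping, namely checking that the factorization of the integrand is correct, verifying that $E_q^{-qt}>0$ on the $q$-lattice points $t=q^n/(1-q)$ so that Hölder applies with real exponents, and keeping track of $[x]_q$ rather than $x$ when the functional equation is applied.
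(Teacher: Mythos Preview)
Your proposal is correct and follows essentially the same route as the paper: Hölder's inequality for the Jackson $q$-integral with exponents $\tfrac{1}{1-s},\tfrac{1}{s}$ gives $\Gamma_q(x+s)\le \Gamma_q(x)^{1-s}\Gamma_q(x+1)^s$, the functional equation yields the upper bound, and the substitution $(x,s)\mapsto(x+s,1-s)$ then gives the lower bound. Your explicit factorization $(t^{x-1}E_q^{-qt})^{1-s}(t^{x}E_q^{-qt})^{s}$ is in fact cleaner than the paper's notation, which writes the factors as $t^{(1-s)(x-1)}E_q^{-(1-s)qt}$ and $t^{sx}E_q^{-sqt}$ and thereby tacitly assumes $(E_q^{u})^{v}=E_q^{uv}$.
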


\begin{proof}
We employ H\"{o}lder's inequality for Jackson's $q$-integral: 
\begin{equation*}
\int_0^{\infty}f(t)g(t)\,d_qt \leq \left[\int_0^{\infty}(f(t))^a\,d_qt  \right]^{\frac{1}{a}}  \left[\int_0^{\infty}(g(t))^b\,d_qt  \right]^{\frac{1}{b}}
\end{equation*}
where $\frac{1}{a}+\frac{1}{b}=1$ and $a>1$.\\
Let $a=\frac{1}{1-s}$, \, $b=\frac{1}{s}$, \, $f(t)=t^{(1-s)(x-1)}E_{q}^{-(1-s)qt}$\, and \,$g(t)=t^{sx}E_{q}^{-sqt}$\\
Then H\"{o}lder's inequality implies
\begin{align*}
\Gamma_{q}(x+s)&=\int_0^{\frac{1}{1-q}}t^{x+s-1}E_{q}^{-qt}\,d_qt \\
&\leq \left[\int_0^{\frac{1}{1-q}}\left(  t^{(1-s)(x-1)} E_{q}^{-(1-s)qt} \right)^{\frac{1}{1-s}}\,d_qt  \right]^{1-s} \\
&\quad \times \left[\int_0^{\frac{1}{1-q}}\left(  t^{sx} E_{q}^{-sqt} \right)^{\frac{1}{s}}\,d_qt  \right]^{s} \\
&=\left[\int_0^{\frac{1}{1-q}}  t^{x-1} E_{q}^{-qt}\,d_qt  \right]^{1-s}  \left[\int_0^{\frac{1}{1-q}}  t^{x} E_{q}^{-qt}\,d_qt  \right]^{s} \\
&=\left[ \Gamma_{q}(x) \right]^{1-s}\left[ \Gamma_{q}(x+1) \right]^{s}
\end{align*}
Thus,
\begin{equation}\label{eqn:q-Gamma-recursive-1} 
\Gamma_{q}(x+s) \leq \left[ \Gamma_{q}(x) \right]^{1-s}\left[ \Gamma_{q}(x+1) \right]^{s}
\end{equation}
Substituting  ~(\ref{eqn:functional-q-Gamma}) into  ~(\ref{eqn:q-Gamma-recursive-1}) yields
\begin{equation*}\label{eqn:q-Gamma-recursive-2} 
\Gamma_{q}(x+s) \leq \left[ \Gamma_{q}(x) \right]^{1-s}[x]_{q}^{s}\left[ \Gamma_{q}(x) \right]^{s}
\end{equation*}
which implies
\begin{equation}\label{eqn:q-Gamma-recursive-3} 
\Gamma_{q}(x+s) \leq [x]_{q}^{s} \Gamma_{q}(x). 
\end{equation}
Replacing $s$ by $1-s$ in equation ~(\ref{eqn:q-Gamma-recursive-3}) gives
\begin{equation}\label{eqn:q-Gamma-recursive-4} 
\Gamma_{q}(x+1-s) \leq [x]_{q}^{1-s} \Gamma_{q}(x). 
\end{equation}
Substituting $x$ by $x+s$ results to
\begin{equation}\label{eqn:q-Gamma-recursive-5} 
\Gamma_{q}(x+1) \leq [x+s]_{q}^{1-s} \Gamma_{q}(x+s). 
\end{equation}
Now combining inequalities ~(\ref{eqn:q-Gamma-recursive-3}) and ~(\ref{eqn:q-Gamma-recursive-5}) gives
\begin{equation*}\label{eqn:q-Gamma-double-ineq-1} 
\frac{\Gamma_{q}(x+1) }{[x+s]_{q}^{1-s}}\leq  \Gamma_{q}(x+s) \leq [x]_q^{s}\Gamma_{q}(x)
\end{equation*}
which can be written as
\begin{equation}\label{eqn:q-Gamma-double-ineq-2} 
\frac{[x]_q}{[x+s]_{q}^{1-s}}\Gamma_{q}(x) \leq  \Gamma_{q}(x+s) \leq [x]_q^{s}\Gamma_{q}(x).
\end{equation}
Finally, equation ~(\ref{eqn:q-Gamma-double-ineq-2}) can be rearranged as:
\begin{equation*}
\left( \frac{[x]_q}{[x+s]_q}\right)^{1-s} \leq \frac{\Gamma_{q}(x+s)}{ [x]_q^s \Gamma_{q}(x)} \leq 1
\end{equation*}
concluding the proof of Lemma \ref{lem:q-of-Wendel}.
\end{proof}

\begin{theorem}\label{thm:q-of-Sandor}
Assume that $q\in(0,1)$. Then the  inequality
\begin{equation}\label{eqn:q-of-Sandor} 
\sqrt{[x]_q} \leq \frac{\Gamma_{q}(x+1)}{ \Gamma_{q} \left( x+\frac{1}{2}\right)} \leq \sqrt{\left[x+\frac{1}{2}\right]_{q}}
\end{equation}
is valid  for any $x>0$.
\end{theorem}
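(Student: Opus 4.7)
The plan is to deduce both bounds from Lemma \ref{lem:q-of-Wendel} by specializing the parameter $s$ to $1/2$, combined with the functional equation $\Gamma_{q}(x+1)=[x]_{q}\Gamma_{q}(x)$ recorded in \eqref{eqn:functional-q-Gamma}. The structural parallel with S\'andor's original argument in \cite{Sandor-2006} is clear: there, inequality \eqref{eqn:Wendel} is applied twice (at $x$ and at a shift of $x$) to sandwich the ratio $\Gamma(x+1)/\Gamma(x+1/2)$. Here the same idea should work verbatim with $[\,\cdot\,]_q$ replacing the identity on the arguments, since Lemma \ref{lem:q-of-Wendel} is exactly the $q$-shaped analogue of \eqref{eqn:Wendel}.

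For the lower bound, I would set $s=1/2$ in \eqref{eqn:q-of-Wendel}, which yields in particular the right half
\[
\Gamma_{q}\!\left(x+\tfrac{1}{2}\right)\le [x]_{q}^{1/2}\,\Gamma_{q}(x).
\]
Multiplying both sides of the reciprocal inequality by $\Gamma_{q}(x+1)=[x]_{q}\Gamma_{q}(x)$ then produces
\[
\frac{\Gamma_{q}(x+1)}{\Gamma_{q}\!\left(x+\tfrac{1}{2}\right)}\ge\frac{[x]_{q}\,\Gamma_{q}(x)}{[x]_{q}^{1/2}\,\Gamma_{q}(x)}=\sqrt{[x]_{q}}.
\]
For the upper bound, I would apply Lemma \ref{lem:q-of-Wendel} again with $s=1/2$, but now at the shifted point $x+1/2$ in place of $x$. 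The right half of the resulting inequality reads
\[
\Gamma_{q}(x+1)\le\bigl[x+\tfrac{1}{2}\bigr]_{q}^{1/2}\,\Gamma_{q}\!\left(x+\tfrac{1}{2}\right),
\]
and dividing by $\Gamma_{q}(x+1/2)$ immediately yields $\Gamma_{q}(x+1)/\Gamma_{q}(x+1/2)\le\sqrt{[x+1/2]_{q}}$.

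There is no real obstacle, since the lemma already packages the nontrivial analytic input (H\"older's inequality for the Jackson integral). The only thing to be careful about is the bookkeeping of the two applications of Lemma \ref{lem:q-of-Wendel} and the correct use of the $q$-functional equation; in particular one must verify that the hypothesis $x>0$ of the theorem keeps the shifted argument $x+1/2$ in the domain where Lemma \ref{lem:q-of-Wendel} applies, which is immediate. Thus the proof reduces to two lines of algebra after invoking the lemma twice.
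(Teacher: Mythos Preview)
Your proposal is correct and follows essentially the same approach as the paper: specialize Lemma~\ref{lem:q-of-Wendel} at $s=\tfrac12$ and combine with the functional equation~\eqref{eqn:functional-q-Gamma}. The only cosmetic difference is that the paper applies \emph{both} halves of \eqref{eqn:q-of-Wendel} at the single point $x$ (obtaining \eqref{eqn:q-of-Sandor-v1} and then multiplying through by $[x]_q$), whereas you apply only the right half of \eqref{eqn:q-of-Wendel} twice, once at $x$ and once at $x+\tfrac12$; since the left half of \eqref{eqn:q-of-Wendel} is itself derived in the lemma's proof by exactly such a shift, the two arguments are equivalent.
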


\begin{proof}
By setting $s=\frac{1}{2}$ in the $q$-analogue of Wendel's inequalities ~(\ref{eqn:q-of-Wendel}), we get
\begin{equation}\label{eqn:q-of-Sandor-v1} 
\frac{1}{\sqrt{[x]_q}} \leq \frac{\Gamma_{q}(x)}{ \Gamma_{q} \left( x+\frac{1}{2}\right)} \leq \frac{\sqrt{\left[x+\frac{1}{2}\right]_{q}}}{[x]_q}
\end{equation}
Using  ~(\ref{eqn:functional-q-Gamma}), we can arrange  ~(\ref{eqn:q-of-Sandor-v1}) as follows:
\begin{equation*}
\sqrt{[x]_q} \leq \frac{\Gamma_{q}(x+1)}{ \Gamma_{q} \left( x+\frac{1}{2}\right)} \leq \sqrt{\left[x+\frac{1}{2}\right]_{q}}.
\end{equation*}
That completes the proof.
\end{proof}

\begin{remark}
Inequalities ~(\ref{eqn:q-of-Wendel})  imply
\begin{equation}\label{eqn:q-limit}
\lim_{x \rightarrow \infty}\frac{\Gamma_{q}(x+s)}{[x]_q^{s}\Gamma_{q}(x)}=1. 
\end{equation}
\end{remark}

\begin{remark}
Since \,
$[x]_q^{\beta-\alpha}\frac{\Gamma_{q}(x+\alpha)}{\Gamma_{q}(x+\beta)} = \frac{\Gamma_{q}(x+\alpha)}{[x]_q^{\alpha}\Gamma_{q}(x)}. \frac{ [x]_q^{\beta} \Gamma_{q}(x)}{\Gamma_{q}(x+\beta)}$,\,   we obtain by  ~(\ref{eqn:q-limit}), 
\begin{equation}\label{eqn:q-limit2}
\lim_{x \rightarrow \infty}[x]_q^{\beta-\alpha}\frac{\Gamma_{q}(x+\alpha)}{\Gamma_{q}(x+\beta)}=1, \quad \alpha, \beta \in(0,1).
\end{equation}
\end{remark}

\begin{remark}
The equalities ~(\ref{eqn:q-limit})  and ~(\ref{eqn:q-limit2})  are the $q$-analogues of the classical Wendel's  asymptotic relation \cite{Wendel-1948}:
\begin{equation}
\lim_{x \rightarrow \infty}\frac{\Gamma(x+s)}{x^{s}\Gamma(x)}=1. 
\end{equation}
\end{remark}

\begin{theorem}\label{thm:q-of-Sandor-v2}
Assume that $q\in(0,1)$ is fixed. Then the  inequality
\begin{equation}\label{eqn:q-of-Sandor-v2} 
\frac{1}{\sqrt{\pi_q}} <  \frac{\Gamma_{q}(x+1)}{ \Gamma_{q} \left( x+\frac{1}{2}\right)} < \left(1+\sqrt{q}\right).\frac{1}{\sqrt{\pi_q}}
\end{equation}
is valid  for $x\in(0,1)$.
\end{theorem}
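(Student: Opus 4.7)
The plan is to observe that the claimed bounds are exactly the boundary values of the ratio
$$F(x) = \frac{\Gamma_q(x+1)}{\Gamma_q\!\left(x+\tfrac12\right)}$$
at $x=0$ and $x=1$, and then to show that $F$ is strictly increasing on $(0,1)$.

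First I would compute the boundary values. Using the functional equation \eqref{eqn:functional-q-Gamma} together with $\Gamma_q(1)=1$ and $\Gamma_q(1/2)=\sqrt{\pi_q}$, we get $F(0)=1/\sqrt{\pi_q}$. For $F(1)$ we have $\Gamma_q(2)=[1]_q\Gamma_q(1)=1$ and $\Gamma_q(3/2)=[1/2]_q\Gamma_q(1/2)=\frac{1-\sqrt{q}}{1-q}\sqrt{\pi_q}$, and since $\frac{1-q}{1-\sqrt{q}}=1+\sqrt{q}$, this yields $F(1)=(1+\sqrt{q})/\sqrt{\pi_q}$. So the theorem reduces to the strict inequalities $F(0)<F(x)<F(1)$ for $x\in(0,1)$.

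Next I would establish monotonicity via the logarithmic derivative
$$\frac{F'(x)}{F(x)} = \psi_q(x+1) - \psi_q\!\left(x+\tfrac12\right).$$
Since the paper has already invoked (citing Shabani) the fact that $\psi_q$ is strictly increasing on $(0,\infty)$, and since $x+1>x+\tfrac12$, the right-hand side is strictly positive. Because $F(x)>0$, this gives $F'(x)>0$ on $(0,\infty)$; in particular $F$ is strictly increasing on $[0,1]$. Combining this with the boundary computations above yields \eqref{eqn:q-of-Sandor-v2}.

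I do not anticipate a real obstacle: the whole argument is driven by the observation that the two bounds are precisely $F(0)$ and $F(1)$. The only point worth stating carefully is that the monotonicity of $\psi_q$ can be quoted directly from the preliminaries, so the argument avoids re-deriving anything from the $q$-integral representation. An alternative to monotonicity would be to apply Lemma \ref{lem:q-of-Wendel} with $s=1/2$ and the substitution $x\mapsto x+\tfrac12$, but the monotonicity approach is shorter and aligns with the endpoint identification.
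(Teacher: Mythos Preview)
Your proposal is correct and matches the paper's own proof essentially line for line: the paper defines $U(q,x)=\Gamma_q(x+1)/\Gamma_q(x+\tfrac12)$, computes $U(q,0)=1/\sqrt{\pi_q}$ and $U(q,1)=(1+\sqrt{q})/\sqrt{\pi_q}$, and then shows $U$ is increasing via $(\ln U)'=\psi_q(x+1)-\psi_q(x+\tfrac12)>0$ using the quoted monotonicity of $\psi_q$. Your side remark about the alternative via Lemma~\ref{lem:q-of-Wendel} is also apt, though the paper does not pursue it here.
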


\begin{proof}
Define a function $U(q,x)$ for $q\in(0,1)$ and $x\geq0$ by
\begin{equation*}
U(q,x)= \frac{\Gamma_{q}(x+1)}{ \Gamma_{q} \left( x+\frac{1}{2}\right)}. 
\end{equation*}
Notice that  $\Gamma_{q}(1)=\Gamma_{q}(2)=1$, \,  $\Gamma_{q}(\frac{1}{2}+1)=[\frac{1}{2}]_q\Gamma_{q}(\frac{1}{2})=[\frac{1}{2}]_q\sqrt{\pi_q}$, \,   $[\frac{1}{2}]_q=\frac{1-\sqrt{q}}{1-q}$, \,  $U(q,0)=\frac{1}{\sqrt{\pi_q}}$ \, and \, $U(q,1)=\left(1+\sqrt{q}\right).\frac{1}{\sqrt{\pi_q}}$. \\
Now let $f(q,x)=\ln U(q,x)$. Then,
\begin{equation*}
f(q,x)=\ln \frac{\Gamma_{q}(x+1)}{ \Gamma_{q} \left( x+\frac{1}{2}\right)}=\ln\Gamma_{q}(x+1) - \ln \Gamma_{q} \left( x+\frac{1}{2}\right). 
\end{equation*}
For a fixed $q\in(0,1)$ we obtain
\begin{equation*}
f'(q,x)=\psi_{q}(x+1) - \psi_{q} \left( x+\frac{1}{2}\right)>0, 
\end{equation*}
since $\psi_q(x)$ is increasing for $x>0$. Hence, $U(q,x)=e^{f(q,x)}$ is increasing on  $x>0$ and for $x\in(0,1)$ we have
\begin{equation*}
U(q,0) < U(q,x) <  U(q,1)
\end{equation*}
establishing  ~(\ref{eqn:q-of-Sandor-v2}).
\end{proof}

\begin{remark}
Define $F$ by  $F(q,x)=[x]_q^{-\frac{1}{2}} \frac{\Gamma_q(x+1)}{\Gamma_q(x+\frac{1}{2})}$\, for $q\in(0,1)$ and $x>0$. Let $g(x) = \ln F(q,x)=\ln \Gamma_q(x+1) - \ln \Gamma_q(x+\frac{1}{2}) - \frac{1}{2}\ln[x]_q$. Then,
$g'(x) = \psi_q(x+1) - \psi_q(x+\frac{1}{2}) + \frac{1}{2}\frac{q^x\ln q}{1-q^x}$. 
The following Stieltjes integral representations are valid.
\begin{equation*}
\psi_q(x)=-\ln(1-q)-\int_0^{\infty}\frac{e^{-xt}}{1-e^{-t}}\,d\mu_q(t), \quad  \int_0^{\infty}e^{-xt}\,d\mu_q(t)=-\frac{q^x\ln q}{1-q^x}
\end{equation*}
where $\mu_q(t) = -\ln q \sum_{k=1}^{\infty}\delta(t+k\ln q)$ and $\delta$ represents the Dirac delta function.
See \cite{Ismail-Muldoon-2013} and the references therein. Then that implies, 
\begin{align*}
g'(x)& =-\int_0^{\infty}\frac{e^{-(x+1)t}}{1-e^{-t}}\,d\mu_q(t) + \int_0^{\infty}\frac{e^{-(x+\frac{1}{2})t}}{1-e^{-t}}\,d\mu_q(t) -  \frac{1}{2}\int_0^{\infty}e^{-xt}\,d\mu_q(t)\\
      &=\int_0^{\infty}\frac{e^{-xt}}{1-e^{-t}}\phi(t)\,d\mu_q(t)
\end{align*}
where $\phi(t)=e^{-\frac{1}{2}t}-\frac{1}{2}e^{-t}-\frac{1}{2}<0$. By the Hausdorff-Bernstein-Widder theorem  (see \cite{Mortici-2010} and the references therein), we obtain $g'(x)<0$,  so $g(x)$ is strictly deacreasing. Consequently, $F(q,x)$ is strictly decreasing. Hence, $F(q,x)\geq \lim_{x\rightarrow \infty}F(q,x) =1$ yielding the lower bound of  ~(\ref{eqn:q-of-Sandor}).
\end{remark}

\begin{remark} 
Define $G$ by  $G(q,x)=[x+\frac{1}{2}]_q^{-\frac{1}{2}} \frac{\Gamma_q(x+1)}{\Gamma_q(x+\frac{1}{2})}$\, for $q\in(0,1)$ and $x>0$. Let $w(x) = \ln G(q,x)=\ln \Gamma_q(x+1) - \ln \Gamma_q(x+\frac{1}{2}) - \frac{1}{2}\ln[x+\frac{1}{2}]_q$. Then,
$w'(x) = \psi_q(x+1) - \psi_q(x+\frac{1}{2}) + \frac{1}{2}\frac{q^{x+\frac{1}{2}}\ln q}{1-q^{x+\frac{1}{2}}}$. 
By setting $a=\frac{1}{2}$, $b=1$, $c=\frac{1}{2}$ and $k=1$ in Theorem 7.2 of \cite{Qi-2010-Hindawi}, we obtain,\\
$\psi_q(x+1) - \psi_q(x+\frac{1}{2}) \geq - \frac{1}{2}\frac{q^{x+\frac{1}{2}}\ln q}{1-q^{x+\frac{1}{2}}}$.\\
That implies $w'(x)\geq0$,  so $w(x)$ is increasing. As a result, $G(q,x)$ is also increasing. Hence, $G(q,x)\leq \lim_{x\rightarrow \infty}G(q,x) =1$ yielding the upper bound of  ~(\ref{eqn:q-of-Sandor}).
\end{remark}

\begin{remark}
Let $H(q,x)=\frac{\sqrt{\pi_q}\Gamma_q(x+1)}{\Gamma_q(x+\frac{1}{2})}$. Then,  $H(q,x)$ is increasing, and  for $x\in(0,1)$ we have, \\ $1=\lim_{x\rightarrow 0^+}H(q,x)\leq H(q,x)$ and $H(q,x)\leq \lim_{x\rightarrow 1^-}H(q,x)=1+\sqrt{q}$ respectively yielding the lower and upper bounds of  ~(\ref{eqn:q-of-Sandor-v2}). 
\end{remark}

\noindent
By the above remarks, the estimates in  ~(\ref{eqn:q-of-Sandor}) and ~(\ref{eqn:q-of-Sandor-v2}) are sharp.\\

\section*{Acknowledgements}
The authors are very grateful to the anonymous refrees  for their useful comments and suggestions, which helped in improving the quality of this paper. \\

\bibliographystyle{plain}


\end{document}